\theoremstyle{plain}
\newtheorem{thm}{Theorem}[section]
\newtheorem{lem}[thm]{Lemma}
\newtheorem{prop}[thm]{Proposition}
\theoremstyle{definition}
\newtheorem{rmk}[thm]{Remark}
\newtheorem{defn}[thm]{Definition}
\numberwithin{equation}{section}
\newcommand{\sD}{{\mathcal D}}
\newcommand{\sH}{{\mathcal H}}
\newcommand{\C}{{\mathbb C}}
\newcommand{\N}{{\mathbb N}}
\newcommand{\Q}{{\mathbb Q}}
\newcommand{\R}{{\mathbb R}}
\begin{document}

\title[]{On uniform lattices in real semisimple groups}

\author[C.Bhagwat]{Chandrasheel Bhagwat}
\address{Indian Institute of Science Education and Research, Pune-
  411008, India}

\email{cbhagwat@iiserpune.ac.in}
\author[S.Pisolkar]{Supriya Pisolkar}
\address{Indian Institute of Science Education and Research, Pune-
  411008, India}
\email{supriya@iiserpune.ac.in}

\subjclass[2010]{Primary: 22E45; Secondary: 22E40, 11M36, 11F72}

\date{\today}

\begin{abstract}
In this article we prove that the co-compactness of the arithmetic lattices in a connected
semisimple real Lie group is preserved if the lattices under consideration are
representation equivalent. This is in the spirit of the question posed by Gopal
 Prasad and A. S. Rapinchuk in \cite{PR1} where instead of representation equivalence,
the lattices under consideration are weakly commensurable Zariski dense subgroups.
\end{abstract}

\thanks{C.B. is partially supported by DST-INSPIRE Faculty scheme, award number [IFA- 11MA-05]}

\maketitle
\section{Introduction}
In \cite{PR}, G. Prasad and A. S. Rapinchuk defined the notion of weakly commensurable
Zariski dense subgroups in absolutely almost simple algebraic groups. Among many other
striking implications of this seemingly weak notion they have proved that weakly commensurable
 subgroups in the group of rational points of absolutely almost simple algebraic groups
determine the type of the group except in the case when one is of type B and the other of
type C. In this paper they show that length commensurable arithmetic lattices are weakly
commensurable. For this, when the locally symmetric spaces are of rank greater than 1, they
assume the validity of Schanuel's conjecture. Further using methods from arithmetic theory
of algebraic groups, they obtain commensurability type results for isospectral compact
locally symmetric spaces.

In \cite{BPR}, we assume a stronger hypothesis that the lattices defining the locally symmetric
 spaces are representation equivalent rather than isospectral on functions. This allowed us to
obtain similar conclusions as in \cite{PR} for representation equivalent lattices, without
invoking Schanuel's conjecture.

In the sequel to their work on weakly commensurable subgroups, in \cite{PR1} Gopal Prasad and
A.S. Rapinchuk have posed the following question. For $i=1,2$, let $G_i$ be a connected absolutely
 almost simple group defined over $F = \R$ or $\C$ and $\Gamma_i$ be a lattice in $G_i(F)$. Assume
that $\Gamma_1$ is weakly commensurable to $\Gamma_2$. Does compactness of $\Gamma_1 \backslash G_1(F)$
imply the compactness of $\Gamma_2 \backslash G_2(F)$? When the corresponding locally symmetric spaces
are length commensurable and one of the space is arithmetically defined, the results Theorem $6$ and
Theorem $7$ of \cite{PR} provide an affirmative answer to the above question. We recall that the co-compactness
of a lattice in a semisimple real Lie group is equivalent to the absence of nontrivial unipotents in it
(cf. \cite{R}, Corollary 11.13). Thus the above question can be rephrased as whether for two weakly commensurable
lattices, the existence of nontrivial unipotent elements in one of them implies their existence in the other.

In this article we address a similar question under the stronger hypothesis of representation equivalence.
We prove that:

\begin{thm}\label{intro-main}
Let $G$ be a connected semisimple real Lie group. Let $\Gamma_{1}$, $\Gamma_{2}$ be representation equivalent
arithmetic lattices in $G$. Then $\Gamma_1\backslash G$ is compact if and only if $\Gamma_{2}\backslash G$
is compact.
\end{thm}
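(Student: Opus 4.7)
The plan is to reduce the theorem, via the criterion recalled in the introduction (a lattice $\Gamma$ in $G$ is cocompact $\iff$ $\Gamma$ contains no non-trivial unipotent element), to a rigidity statement for the defining arithmetic data of $\Gamma_{1}$ and $\Gamma_{2}$. Specifically, I would show that representation equivalence forces the underlying arithmetic data of the two lattices to coincide up to commensurability, and then invoke Godement's criterion, which translates cocompactness into $F$-anisotropy of the defining algebraic group. Given this, the equivalence of cocompactness for $\Gamma_{1}$ and $\Gamma_{2}$ is then a property of the common arithmetic data, rather than of the lattices individually.

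The key step is to extract from the representation equivalence hypothesis a matching of the geometric sides of the Selberg/Arthur trace formulas for $\Gamma_{1}$ and $\Gamma_{2}$, by testing against compactly supported smooth functions on $G$. Following the strategy developed in our earlier work \cite{BPR}, this matching of orbital integrals pins down the defining arithmetic data---namely the totally real field $F$ and the absolutely almost simple $F$-algebraic group $\mathbf{H}$ giving rise to each $\Gamma_{i}$---up to commensurability (and modulo the standard B/C exception of Prasad--Rapinchuk). The decisive advantage of the representation equivalence hypothesis over isospectrality on functions is that Schanuel's conjecture can be bypassed, which was the main achievement of \cite{BPR}. Once this arithmetic rigidity is in place, Godement's criterion concludes the argument: $\Gamma_{i}\backslash G$ is compact iff $\mathbf{H}_{i}$ is $F$-anisotropic, and this anisotropy property is invariant under the commensurability of arithmetic data, and under inner and outer twists, so that even the B/C exception (should it arise) does not affect the conclusion, since cocompactness depends only on anisotropy.

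The principal obstacle is the first step---the passage from the representation-theoretic identity to the arithmetic rigidity of the lattices. This requires a careful analysis of the trace formula: one must distinguish unipotent and semisimple orbital integrals (in the hypothetical failure scenario one lattice has a non-trivial unipotent while the other does not), and handle the regularization issues intrinsic to the non-cocompact setting (weighted orbital integrals, truncation of the spectral side, contributions from the continuous spectrum). Once the spectral-to-arithmetic transfer is secured, the remaining anisotropy argument is essentially formal, relying only on the standard correspondence between positive $F$-rank of $\mathbf{H}$ and the existence of non-trivial $F$-unipotents in $\mathbf{H}(F)$.
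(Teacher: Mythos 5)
Your strategy (trace formula $\Rightarrow$ arithmetic rigidity $\Rightarrow$ Godement) is not the route the paper takes, and its key step has a genuine gap. The paper's proof is purely spectral and much shorter: it first proves (Theorem \ref{cocompactness}) that an arithmetic lattice $\Gamma$ is uniform if and only if $R_{\Gamma}$ decomposes as a Hilbert direct sum of irreducibles with finite multiplicities --- the nontrivial direction attaches, via Borel--Tits, a proper per-cuspidal $\Q$-parabolic subgroup to any nontrivial unipotent of $\Gamma$ and then invokes Langlands' theory of Eisenstein series to produce a nonzero continuous spectrum from it --- and then observes that representation equivalence forces the decomposition measures $\mu_1,\mu_2$ on $\widehat{G}$ to be mutually absolutely continuous (Proposition \ref{absolute-measure}), hence to have equal supports, so discreteness of the spectrum passes from $\Gamma_1$ to $\Gamma_2$. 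No trace formula, no orbital integrals, and no identification of the defining arithmetic data are needed.

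The gap in your proposal is precisely its first step. In the only scenario that matters ($\Gamma_1$ uniform, $\Gamma_2$ not), the operator $R_{\Gamma_2}(f)$ for $f\in C_c^{\infty}(G)$ is not of trace class, so the ``matching of geometric sides'' you invoke is not the Selberg trace formula identity underlying \cite{BPR}: one side would require Arthur's truncation and weighted orbital integrals while the other would not, and you give no argument for comparing a trace-class spectral side with a truncated one. This is not a routine regularization issue; it is the heart of the problem. Moreover, even granting the output of \cite{BPR} (elementwise conjugacy, hence characteristic equivalence and weak commensurability), that output cannot by itself detect unipotents: a nontrivial unipotent has characteristic polynomial $(t-1)^{n}$, which is also realized by the identity, so characteristic-polynomial data cannot distinguish a lattice containing nontrivial unipotents from one without. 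Finally, your closing claim that $F$-anisotropy is ``invariant under inner and outer twists'' is false (e.g.\ $\mathrm{SL}_{n}$ and $\mathrm{SL}_{1}(D)$ for $D$ a division algebra are inner forms with different $F$-rank), so the formal endgame does not go through as stated; what you would actually need there is the Prasad--Rapinchuk theorem that weakly commensurable arithmetic lattices are simultaneously cocompact or not, which is itself a deep result rather than a consequence of ``commensurability of arithmetic data up to twists.''
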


\begin{rmk} By the arithmeticity theorem of Margulis, if $G$ is a real semisimple algebraic group
 without compact factors and such that $\R$-rank of $G$ is $\geq 2$, then every irreducible lattice is arithmetic.
If $\R$-rank is $1$, a result of Corlette in archimedean case and Gromov-Schoen in non-archimedean case
shows that lattices in $Sp(n, 1), n \geq 2$ and $F_4^{-20}$ are arithmetic.
\end{rmk}

\begin{rmk} For $p$-adic groups, Theorem \ref{intro-main} is a tautology since every lattice is co-compact.
\end{rmk}

\section{Preliminaries}

\subsection{Lattices and representation equivalence}

Let $G$ be a connected semisimple real Lie group. Suppose $\Gamma$ is a discrete subgroup of $G$ such that
the quotient $\Gamma \backslash G$ has a finite $G$-invariant Borel measure $\mu$.
Consider the space $L^{2}(\Gamma \backslash G)$ of all complex valued measurable $\Gamma$-invariant functions
on $G$ such that
\[ \int \limits_{\Gamma \backslash G} {\mid f(x)\mid}^2 d\mu(x) < \infty.\]
The right regular representation $R_{\Gamma}$ of $G$ is on $L^{2}(\Gamma \backslash G)$ defined by,
\[ R_{\Gamma}(g)f(x) = f(xg) \quad \forall~ g,x \in G ~\text{and}~ f\in  L^{2}(\Gamma \backslash G) \]
It is well known that this defines a unitary representation of $G$ on the Hilbert space $L^{2}(\Gamma \backslash G)$.

\smallskip

Let $\widehat{G}$ be the set of all equivalence classes of irreducible unitary representations of $G$.
We will denote an element of $\widehat{G}$ by $\omega$.
We now recall the following result (cf. \cite{W}, 14.10.5) which describes the direct integral decomposition of $R_{\Gamma}$
with respect to the irreducible unitary representations of the group $G$.

\begin{thm} \label{directintegral} Let $(\pi,\sH)$ be a unitary representation of $G$ on a Hilbert space $\sH$.
There exists a Borel measure $\sigma$ on $\widehat{G}$ and a family of unitary representations $(\pi_{\omega},~ H_{\omega})$
such that:\smallskip
\begin{enumerate}
\item The representation $(\pi,\sH)$ is unitarily equivalent to a direct integral as follows:
 \[(\pi,\sH) \cong  \int \limits_{\widehat{G}} (\pi_{\omega}, H_{\omega})~ d\sigma(\omega). \]
\item Each $(\pi_{\omega}, H_{\omega})$ is unitarily equivalent to the Hilbertian tensor product
 $(\pi^{'}_{\omega}\otimes I,~ H^{'}_{\omega} \otimes V_{\omega})$ of an irreducible unitary
 representation
 $(\pi^{'}_{\omega},H^{'}_{\omega}) \in \omega$ and the trivial $G$ representation $I$
    on some Hilbert space $V_{\omega}$.\smallskip
\item The map $\omega \mapsto \text{dim}(V_{\omega})$ is measurable w.r.t. the measure $\sigma$.\medskip
\end{enumerate}
\end{thm}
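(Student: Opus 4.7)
The plan is to invoke the standard Mackey--Dixmier direct integral decomposition for unitary representations of a locally compact group of Type I. The essential input, which I would cite rather than reprove, is that any connected semisimple real Lie group $G$ is of Type I (Harish-Chandra). This guarantees that $\widehat{G}$ carries a standard Borel structure (the Mackey Borel structure) and that direct integrals over $\widehat{G}$ have the clean multiplicity-free form asserted in (2).

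First I would form the von Neumann algebra $\sM = \pi(G)''$ and its center $\sZ = \sM \cap \sM'$. Since $\sZ$ is an abelian von Neumann algebra on a separable Hilbert space, the spectral theorem furnishes a standard probability measure space $(X,\sigma)$ and a unitary identification $\sH \cong \int_X^\oplus \sH_x\, d\sigma(x)$ under which $\sZ$ becomes $L^\infty(X,\sigma)$ acting diagonally. Because $\sZ$ commutes with $\pi(G)$, the representation $\pi$ disintegrates compatibly as $\pi \cong \int_X^\oplus \pi_x\, d\sigma(x)$, establishing the form of (1) over the auxiliary space $X$.

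Next, diagonalizing the \emph{center} of $\sM$ means that almost every fiber representation $\pi_x$ is factorial, i.e.\ $\pi_x(G)''$ is a factor. Here the Type I hypothesis on $G$ enters decisively: each such factor must be of type I, so the general structure theory for type I factors gives a canonical decomposition $(\pi_x,\sH_x) \cong (\pi'_x \otimes I,\ \sH'_x \otimes V_x)$ with $(\pi'_x,\sH'_x)$ irreducible and $V_x$ a Hilbert space recording the multiplicity. The assignment $x \mapsto [\pi'_x]$ is a measurable map into $\widehat G$; pushing $\sigma$ forward along this map, and viewing $X$ as a measurable subset of $\widehat G$, yields (1) and (2) in the form stated. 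Part (3) then follows because the multiplicity function $\omega \mapsto \dim V_\omega$ is Borel with respect to the Mackey Borel structure, again a consequence of Type I decomposition theory.

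The main obstacle in this argument is the Type I property of $G$, without which the factorial fibers need not be of the form $\pi'_\omega \otimes I$ and the multiplicity function need not be well-defined. Since this is a deep theorem of Harish-Chandra, I would quote it rather than reprove it; the remaining steps are essentially formal consequences of the spectral theorem for abelian von Neumann algebras together with Dixmier's disintegration theory, exactly as carried out in Warner's exposition cited by the authors.
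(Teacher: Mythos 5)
The paper gives no proof of this statement: it is recalled verbatim from Wallach (\cite{W}, 14.10.5), with the accompanying uniqueness of $\sigma$ deferred to Dixmier \cite{D}. Your sketch is a correct outline of the standard argument behind that citation---central disintegration of $\pi(G)''$ via the spectral theorem for its abelian center, followed by type I structure theory (resting on Harish-Chandra's theorem that connected semisimple real Lie groups are of type I) to put the factorial fibers in the form $\pi'_\omega \otimes I$ and to push the measure forward to $\widehat{G}$ with a measurable multiplicity function---so it matches the intended proof; the only hypothesis you use that the statement omits is separability of $\sH$, which holds in the paper's application to $L^2(\Gamma\backslash G)$.
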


The following result from (cf. \cite{D}) gives the appropriate uniqueness for the measure $\sigma$ in
Theorem \ref{directintegral}.

\begin{prop}\label{absolute-measure} If there are two Borel measures $\sigma$ and $\mu$ on $\widehat{G}$ such that all the three
 conditions in
Theorem $\ref{directintegral}$  hold, then
$\sigma$ and $\mu$  are mutually absolutely continuous i.e., for any Borel set $E$,
 $$\sigma(E) = 0 \Leftrightarrow \mu(E) = 0.$$
\end{prop}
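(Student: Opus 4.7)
The idea is to recover the measure class of $\sigma$ intrinsically from the representation $(\pi,\sH)$, so that any two measures satisfying the conditions of Theorem~\ref{directintegral} must share the same null ideal. The vehicle is the abelian von Neumann algebra of diagonal operators attached to the decomposition.

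For each bounded Borel function $f$ on $\widehat G$, I would define the decomposable operator $T_f$ acting on the fiber $H_\omega$ as $f(\omega)\cdot \mathrm{id}_{H_\omega}$; property~(3) supplies the measurability needed to make $T_f$ a well-defined bounded operator on the direct integral. Set $\sA_\sigma = \{T_f : f \in L^\infty(\widehat G,\sigma)\}$; the assignment $f \mapsto T_f$ descends to a normal $*$-isomorphism $L^\infty(\widehat G,\sigma) \cong \sA_\sigma$. By property~(2), on the fiber $H_\omega = H'_\omega \otimes V_\omega$ the von Neumann algebra $\pi(G)''$ acts as $B(H'_\omega)\otimes \mathrm{id}_{V_\omega}$ (Schur's lemma, since $\pi'_\omega$ is irreducible) and $\pi(G)'$ acts as $\mathrm{id}_{H'_\omega}\otimes B(V_\omega)$. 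The fiber of the center $\sZ := \pi(G)''\cap\pi(G)'$ is therefore $\C\cdot \mathrm{id}_{H_\omega}$, which shows $\sA_\sigma \subseteq \sZ$.

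For the reverse inclusion, any $Z \in \sZ$ is itself decomposable, $Z = \int Z_\omega\, d\sigma(\omega)$, and by the fiber-wise computation above each $Z_\omega$ is a scalar $\lambda(\omega)\cdot \mathrm{id}_{H_\omega}$; a measurable selection produces a Borel $\lambda$ on $\widehat G$ with $Z = T_\lambda \in \sA_\sigma$. Thus $\sA_\sigma = \sZ$. Running the same argument for the $\mu$-decomposition gives $\sA_\mu = \sZ$, and hence $\sA_\sigma = \sA_\mu$ as concrete subalgebras of $B(\sH)$.

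Since both identifications $L^\infty(\widehat G,\sigma) \cong \sA_\sigma$ and $L^\infty(\widehat G,\mu) \cong \sA_\mu$ are implemented by the identical recipe ``Borel function acts diagonally'', a bounded Borel $f$ represents the zero class in $L^\infty(\widehat G,\sigma)$ exactly when $T_f = 0$, equivalently when $f$ represents the zero class in $L^\infty(\widehat G,\mu)$. Specializing to $f = \chi_E$ yields $\sigma(E) = 0 \Leftrightarrow \mu(E) = 0$, proving the proposition. The main obstacle is the measurable-selection step: passing from fiber-wise scalarity of $Z_\omega$ to genuine Borel measurability of $\lambda$ uses property~(3) together with the standard Borel structure on $\widehat G$ for type~I groups, and is essentially the technical core of the uniqueness theorem cited in \cite{D}.
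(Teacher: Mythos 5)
The paper does not actually prove this proposition; it is quoted from Dixmier \cite{D}, so the only comparison available is with the standard reference argument, which is indeed the von Neumann algebra argument you outline. Your architecture --- identify the diagonal algebra with the center $\sZ=\pi(G)''\cap\pi(G)'$, hence recover it intrinsically from $\pi$ --- is the right one, and your fiberwise Schur's lemma computation and the measurable-selection caveat are correctly placed.

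However, the last step as written has a genuine gap. You establish $\sA_\sigma=\sZ=\sA_\mu$ as subalgebras of $B(\sH)$ and then conclude that a bounded Borel $f$ is $\sigma$-null iff $T_f=0$ iff $f$ is $\mu$-null. But there are two operators here, $T_f^\sigma$ and $T_f^\mu$, defined via two different unitary identifications of $\sH$ with two different direct integrals; knowing that the two maps $f\mapsto T_f^\sigma$ and $f\mapsto T_f^\mu$ have the same \emph{image} $\sZ$ does not give that they are the same map, and ``implemented by the identical recipe'' is not an argument --- the recipes act in different models of $\sH$ and the intertwining unitaries need not match the diagonal actions pointwise over $\widehat G$. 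What is needed is an intrinsic characterization of $T_{\chi_E}$: for a Borel set $E\subseteq\widehat G$, the restriction of $\pi$ to the range of $T_{\chi_E}^\sigma$ is a direct integral of irreducibles with classes in $E$, while its restriction to the range of $1-T_{\chi_E}^\mu$ is supported on $E^c$; since direct integrals of irreducibles over disjoint Borel subsets of $\widehat G$ are disjoint representations (this uses that $G$ is type I and $\widehat G$ is standard, and is where condition (2) --- that the fiber at $\omega$ lies in the class $\omega$ --- really enters), the corresponding central projections satisfy $T_{\chi_E}^\sigma\le T_{\chi_E}^\mu$, and by symmetry they coincide. Only then does $\sigma(E)=0\Leftrightarrow T_{\chi_E}^\sigma=0\Leftrightarrow T_{\chi_E}^\mu=0\Leftrightarrow\mu(E)=0$ follow. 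This disjointness step is the actual content of the uniqueness theorem in \cite{D}, and it is precisely the point your sketch elides.
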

\vspace{2mm}

\subsection{Eisenstein series:}\label{eisenstein} In this subsection we recall some of the relevant facts from
the theory of Eisenstein series from Langlands' work ~ \cite{L1}, \cite{L2} and \cite{OW}. In particular we discuss the
decomposition of the Hilbert space $L^{2}(\Gamma \backslash G)$ into certain $G$-invariant
spaces parametrized by various parabolic subgroups.

Let $G$ be the group of real points of a connected semisimple  group $\mathbf{G}$ defined
over $\Q$ and let $\Gamma$ be a lattice in $G$ which we assume to be neat.
Let us fix a minimal parabolic subgroup $\mathbf{P}$ of $\mathbf{G}$ defined over $\Q$ and
a maximal $\Q$-split torus $\mathbf{A}$ of $\mathbf{P}$. A standard cuspidal parabolic subgroup
$P$ is  the normalizer of a parabolic subgroup $\mathbf{P}$.

Let $\mathfrak{a}_{\C}$ be the complexification of the Lie algebra of  $\mathbf{A}$. The set
$\mathfrak{a}$ of real points of $\mathfrak{a}_{\C}$ corresponding to the split component of $P$.
Consider a decomposition $P = AMN$ of $P$, where $A = \mathbf{A}_{\R}^{o}$ is the analytic subgroup
of $G$ with Lie algebra  $\mathfrak{a}$; $N$ is the Unipotent radical of $P$ and $M$ is a reductive
group identified with $N \backslash MN $. Since $\Gamma$ is neat,  $\Gamma \cap P \subseteq MN$ and
$\Theta: = \Gamma \cap N \backslash \Gamma \cap MN $ can be thought of as a subgroup of $N \backslash MN  \cong M$.
Let $S = MN$. Let $(P,S)$ and $(P',S')$ be two split parabolic subgroups of $G$. Then we say that $(P,S)$
is a successor of $(P', S')$ i.e. $(P,S) \geq (P',S')$ if
$P \supset P'$ and $S \supset S'$. Further $(P,S)$ is called as a dominant successor of $(P', S')$ if
there exists a chain \\
$$(P,S) = (P_1,S_1) \geq (P_2,S_2) \geq \cdots \geq(P_n,S_n) = (P',S')$$
such that $$P_1 \supseteq P_2 \supseteq \cdots \supseteq P_n$$
and $$A_1 \subseteq A_2 \subseteq A_2 \subseteq \cdots \subseteq A_n$$ and
${\text dim} (A_{i+1}) - {\text dim}(A_i) = 1, 1 \leq i \leq n$.

\begin{defn} A subgroup $(P,S)$ is said to be $\Gamma$-cuspidal if every dominant successor $(P',S')$ of $(P,S)$
has the following properties: \\
\noindent (1) $\Gamma \cap P'$ is contained in $S'$. \\
\noindent (2) $N'/N'\cap \Gamma$ is compact.\\
\noindent (3) $S'/S'\cap \Gamma$ is of finite volume.\\
If, moreover $S/S \cap \Gamma$ is compact then $(P,S)$ is said to be $\Gamma$ per-cuspidal.
\end{defn}

Let $E(G, \Gamma)$ denote the set of all $\Gamma$-percuspidal subgroups of $G$. We recall here the
important result about $E(G, \Gamma)$ ( cf. \cite{OW}, Proposition 2.6)

\begin{prop} Modulo $\Gamma$-conjugacy, there are only finitely many elements of $E(G, \Gamma)$.
\end{prop}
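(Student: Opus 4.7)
The plan is to identify $\Gamma$-percuspidal subgroups with the real points of minimal parabolic $\Q$-subgroups of $\mathbf{G}$, and then to invoke the classical cusp-finiteness theorem from reduction theory.

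First, I would show that each $\Gamma$-percuspidal pair $(P,S)$ is the real-point realization of a minimal parabolic $\Q$-subgroup of $\mathbf{G}$. Applying condition (3) of cuspidality to $(P,S)$ itself gives that $\Gamma \cap S$ is a lattice in $S$, and percuspidality strengthens this to $S/(S \cap \Gamma)$ compact; condition (2) gives $N/(N \cap \Gamma)$ compact, forcing $M/(M \cap \Gamma \cdot N)N$ compact as well. Since $\Gamma$ is arithmetic, hence commensurable with $\mathbf{G}(\Z)$, the standard rationality criterion (a parabolic subgroup whose real points contain an arithmetic lattice must be defined over $\Q$) forces $P$, and by the same argument every dominant successor of $(P,S)$, to be a $\Q$-parabolic subgroup. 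Combined with percuspidality, which prevents any further nontrivial dominant successor from existing, this forces $P$ to be a minimal parabolic $\Q$-subgroup of $\mathbf{G}$.

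Next, by the Borel--Tits theorem, all minimal parabolic $\Q$-subgroups of $\mathbf{G}$ are $\mathbf{G}(\Q)$-conjugate, so every element of $E(G,\Gamma)$ is a $\mathbf{G}(\Q)$-conjugate of the fixed minimal $\Q$-parabolic $\mathbf{P}$ from the preamble. Since parabolic subgroups are self-normalizing in $\mathbf{G}$, the set of $\mathbf{G}(\Q)$-conjugates of $\mathbf{P}$ is naturally parametrized by $\mathbf{G}(\Q)/\mathbf{P}(\Q)$, and hence the $\Gamma$-conjugacy classes of such conjugates are parametrized by $\Gamma \backslash \mathbf{G}(\Q)/\mathbf{P}(\Q)$. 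It therefore suffices to prove that this double coset set is finite.

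Finally, finiteness of $\Gamma \backslash \mathbf{G}(\Q)/\mathbf{P}(\Q)$ is the classical cusp-finiteness result of Borel and Harish-Chandra for arithmetic groups. I would prove it by first replacing $\Gamma$ with a commensurable congruence subgroup of $\mathbf{G}(\Z)$ (a harmless modification, which at worst inflates the cardinality by a finite factor), then passing to the adelic double coset $\mathbf{G}(\Q)\backslash \mathbf{G}(\A_f)/\mathbf{P}(\A_f)K_f$ for an appropriate compact open subgroup $K_f \subset \mathbf{G}(\A_f)$, and using finiteness of the class number of $\mathbf{G}$ together with strong approximation on the unipotent radical of $\mathbf{P}$. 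I expect this adelic finiteness to be the main obstacle, since it is the nontrivial arithmetic input of the argument; the remaining steps are essentially formal once the $\Q$-rational structure on $\Gamma$-percuspidal subgroups has been established.
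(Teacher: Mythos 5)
The paper does not actually prove this proposition; it is imported verbatim from Osborne--Warner (\cite{OW}, Proposition 2.6), where it is established for a general lattice admitting Langlands' geometric reduction theory, by working directly with Siegel domains and a fundamental set for $\Gamma$. Your route is the arithmetic-specific one: identify percuspidal pairs with minimal parabolic $\Q$-subgroups and reduce to the finiteness of $\Gamma\backslash\mathbf{G}(\Q)/\mathbf{P}(\Q)$, which is Borel's classical cusp-finiteness theorem (you could simply cite it rather than re-derive it adelically; the standard proof is via Siegel sets, not class numbers and strong approximation, though the adelic argument also works). Since the paper only ever applies the proposition to arithmetic lattices, your restriction to that case costs nothing here, and the skeleton of your argument --- percuspidal $\Rightarrow$ minimal $\Q$-parabolic, all such are $\mathbf{G}(\Q)$-conjugate by Borel--Tits, parabolics are self-normalizing, hence $\Gamma$-classes biject with $\Gamma\backslash\mathbf{G}(\Q)/\mathbf{P}(\Q)$ --- is sound and is the standard argument.

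One step is justified incorrectly, though the conclusion it aims at is true. In the paper's convention a (dominant) successor $(P',S')$ of $(P,S)$ satisfies $P'\supset P$, i.e.\ successors are \emph{larger} parabolic subgroups, and such successors always exist (up to $G$ itself); so ``percuspidality prevents any further nontrivial dominant successor from existing'' is not a meaningful statement and cannot be what forces minimality. The correct reason $P$ is a \emph{minimal} $\Q$-parabolic is the extra percuspidality condition that $S/S\cap\Gamma$ is compact: since $\Theta=\Gamma\cap N\backslash\Gamma\cap MN$ is then a cocompact arithmetic lattice in $M$, the reductive group $M$ must be anisotropic over $\Q$ modulo its split center, which is exactly the statement that $P$ admits no proper $\Q$-parabolic strictly contained in it, i.e.\ that $P$ is minimal. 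You should also record (as the paper's Remark does, citing \cite{OW}, p.~78) that for arithmetic $\Gamma$ the split component $S$ is pinned down along with $P$, so that counting $\Gamma$-classes of pairs $(P,S)$ reduces to counting $\Gamma$-classes of the parabolics $P$ themselves; otherwise the bijection with $\Gamma\backslash\mathbf{G}(\Q)/\mathbf{P}(\Q)$ is not quite literal.
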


The number of cusps of $\Gamma$ is then by definition,  $ |(\Gamma \backslash E(G, \Gamma)) |$.

\begin{defn}Two cuspidal subgroups $P$ and $P'$ are said to be associate if there is an element of
the Weyl group of  which takes $\mathfrak{a}_C$ to $\mathfrak{a}'_C$.
\end{defn}

Consider a decomposition $P = AMN$ of $P$ as before and denote by $Z$ the center of universal enveloping algebra of $M$.
Let $V(\xi): = \left\{
\phi \in L^{2}_{0} (\Theta \backslash M): X \phi = \xi(X) \phi
\quad \forall ~ X \in Z \right\}$ for $\xi \in {\rm Hom}(Z,\C)$.\smallskip

Let $E$ be the set of all orbits of the action of $Z$ on ${\rm Hom}(Z,\C)$ and let
$V_{E}:= \bigoplus \limits_{\xi \in E} V(\xi)$. This is a closed $M$-invariant subspace of
$L^{2}_{0} (\Theta \backslash M)$ such that

$$ L^{2}_{0} (\Theta \backslash M) = \bigoplus \limits_{E} V_{E}.$$

Such a $V_{E}$ is called a simple admissible subspace of $L^{2}_{0} (\Theta \backslash M)$.\medskip

Fix such a simple admissible subspace $V$. Let $K$ be a maximal compact subgroup of $G$ and
 $W$ be the space spanned by the matrix coefficients of some irreducible representation of
 $K$.

  Let $\sD(V,W)$ be the space of all continuous functions $\phi$ on $N(\Gamma \cap P) \backslash G$
	such that
$m \mapsto\phi(mg)$ belongs to $V$ and $k \mapsto \phi(gk^{-1})$ belongs to $W$ for all $g \in G$ and
such that the support of $\phi$ on $NM \backslash G$ is compact.

Let $\left\{ P \right\}$ be an associate class of per-cuspidal parabolic subgroups of $G$. Define
$L(\left\{P\right\},\left\{V\right\},W)$
to be the closed subspace spanned by functions $\hat{\phi}$ with $\phi \in \sD(V(P),W)$
for some $P \in \left\{ P \right\}$.

\bigskip

From Lemma 2 in \cite{L1}, we know that:

 \begin{lem}\label{ortho-direct-sum} The space $L^{2}(\Gamma \backslash G)$ is the orthogonal
direct sum of the spaces as follows:

$$L^{2}(\Gamma \backslash G) = \bigoplus \limits_{\left\{P\right\}} L(\left\{P\right\},\left\{V\right\},W).$$

Further each $L(\left\{P\right\},\left\{V\right\},W)$ can be decomposed as:

\begin{equation}
L(\left\{P\right\},\left\{V\right\},W) =  \bigoplus \limits_{i=0}^{g} L_{i}(\left\{P\right\},\left\{V\right\},W)
\end{equation}

where $g$ is the common rank of all parabolic subgroups in the class $\left\{ P\right\}$.
\end{lem}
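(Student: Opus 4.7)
The plan is to view this lemma as a consequence of Langlands' spectral decomposition of $L^{2}(\Gamma \backslash G)$ for arithmetic quotients. Since Lemma \ref{ortho-direct-sum} is quoted directly from Lemma 2 of \cite{L1}, a self-contained proof would reduce to assembling the main steps of that theory; I sketch the route rather than re-derive it.

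First I would set up the building blocks. For each percuspidal $P = AMN$ one has the space of cusp forms $L^{2}_{0}(\Theta\backslash M)$ and its decomposition $\bigoplus_{E} V_{E}$ into simple admissible subspaces under the action of $Z$. For $\phi \in \sD(V,W)$ one forms the Eisenstein series
\[
E(\phi,\lambda,g) \;=\; \sum_{\gamma \in (\Gamma\cap P)\backslash \Gamma} \phi(\gamma g)\, e^{\langle \lambda + \rho,\, H(\gamma g)\rangle}, \qquad \lambda \in \mathfrak{a}^{*}_{\C},
\]
absolutely convergent in a shifted positive chamber and extended elsewhere by meromorphic continuation. The function $\hat\phi$ appearing in the lemma is then realized as the value, or an appropriate residue, of such an Eisenstein series.

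Second, I would derive the orthogonality across associate classes. The key input is the constant-term formula: the constant term of $E_{P}(\phi,\lambda)$ along a parabolic $P'$ vanishes unless $P'$ is associate to $P$, so pairing against any function in $L(\{P'\},V',W)$ for a non-associate class $\{P'\}$ yields zero. Completeness --- the statement that every $f \in L^{2}(\Gamma\backslash G)$ lies in the closure of the span of such $\hat\phi$ --- follows by truncating $f$ and writing the truncation as a wave-packet integral of Eisenstein series, using the Maass--Selberg relations to identify $L^{2}$-norms. Together these steps yield the first direct sum in the lemma.

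Third, for the internal decomposition $\bigoplus_{i=0}^{g} L_{i}(\{P\},V,W)$, the index $g - i$ should be read as the codimension of the contour in $\mathfrak{a}^{*}_{\C}$ used to form the wave packets: $L_{g}$ is spanned by genuine Eisenstein wave packets with contour lying on the full imaginary axis of $\mathfrak{a}^{*}_{\C}$, while each $L_{i}$ with $i < g$ collects iterated residues of $E(\phi,\lambda,g)$ along singular hyperplanes of codimension $g-i$, so that $L_{0}$ is the residual discrete spectrum. The main obstacle is entirely analytic, and lies in establishing the meromorphic continuation and functional equations of the Eisenstein series and in showing that the residues at each depth contribute square-integrable automorphic forms; both rest on the Maass--Selberg relations and on the combinatorics of residues worked out in \cite{L1}. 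For the present paper this analytic machinery is invoked only as a black box.
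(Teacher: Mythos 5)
Your outline is consistent with how the paper handles this statement: the paper gives no proof at all, importing the lemma verbatim as Lemma 2 of Langlands' Boulder notes \cite{L1}, and your sketch is a faithful summary of the actual argument there (constant-term orthogonality across associate classes, completeness via truncation and Maass--Selberg, and the filtration $L_{i}$ by the dimension of the continuous spectrum, i.e.\ by the codimension of the residual contour). Since both you and the authors ultimately treat the Eisenstein-series analysis as a black box from \cite{L1}, \cite{L2} and \cite{OW}, there is nothing to correct.
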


\begin{rmk}\label{imp}
\noindent (1) The important hypothesis about the lattice  $\Gamma$ for the above result as in \cite{L2} was that
$\Gamma$ possesses a fundamental domain.
It follows from the results of Raghunathan and Garland \cite{GR} in the rank one case and of
Margulis \cite{M} in the higher rank case that there exist fundamental domains for the arithmetic
lattices in $G$. Thus the hypotheses in the decomposition theorem of Langlands (as in \cite{L1}, \cite{L2})
are satisfied.\\

\noindent (2) In his result in \cite{L2}, Langlands considers a complete set $\mathcal{P}(G, \Gamma)$ of per-cuspidal subgroups
of $G$. It can be verified (cf. \cite{OW}, Page 78) that the set $E(G, \Gamma)$ is exactly the set $\mathcal{P}(G, \Gamma)$.
\end{rmk}

\section{Main results}
In this section we prove the main result of this article.

\begin{thm}\label{main}
Let $G$ be a connected semisimple real Lie group and $\Gamma_{1}$, $\Gamma_{2}$ be two arithmetic
lattices in $G$. If the lattices $\Gamma_{1}$, $\Gamma_{2}$ are representation equivalent and $\Gamma_1 \backslash G$ is
compact then $\Gamma_2 \backslash G$ is also compact.
\end{thm}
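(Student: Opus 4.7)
The plan is to prove this by contradiction: assume $\Gamma_2 \backslash G$ is non-compact while $\Gamma_1 \backslash G$ is compact, and derive an inconsistency via Proposition \ref{absolute-measure}. Representation equivalence means that $R_{\Gamma_1}$ and $R_{\Gamma_2}$ are unitarily equivalent, so the spectral measures $\sigma_1, \sigma_2$ on $\widehat{G}$ produced by Theorem \ref{directintegral} must be mutually absolutely continuous. The strategy is to show that $\sigma_1$ is purely atomic while $\sigma_2$ has a nonzero continuous part, which is inconsistent with mutual absolute continuity.

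Since $\Gamma_1 \backslash G$ is compact, the classical theorem of Gelfand and Piatetski-Shapiro decomposes $L^2(\Gamma_1 \backslash G)$ as a Hilbert direct sum of irreducible unitary $G$-representations, each with finite multiplicity. Consequently, the direct integral of Theorem \ref{directintegral} applied to $R_{\Gamma_1}$ collapses to a countable discrete sum, and $\sigma_1$ is purely atomic on a countable subset of $\widehat{G}$. For $\Gamma_2$, the compactness criterion recalled in the introduction (\cite{R}, Corollary 11.13) implies that $\Gamma_2$ contains nontrivial unipotents, so there exist proper $\Gamma_2$-percuspidal parabolic subgroups. Applying Lemma \ref{ortho-direct-sum} to $L^2(\Gamma_2 \backslash G)$, the associate class $\{P\}$ of a proper percuspidal contributes a nontrivial summand whose finer decomposition $\bigoplus_{i=0}^{g} L_i(\{P\},\{V\},W)$ has $g \geq 1$. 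The pieces $L_i$ with $i \geq 1$ are Eisenstein contributions, constructed by integrating Eisenstein series against continuous unitary parameters $\lambda$ in an imaginary chamber $i\mathfrak{a}^{*}$; they therefore disintegrate, via Theorem \ref{directintegral}, as direct integrals of generic unitary principal series representations of $G$ parametrized continuously by $\lambda \in i\mathfrak{a}^{*}/W$. Since distinct generic parameters yield inequivalent classes in $\widehat{G}$, the induced push-forward measure on $\widehat{G}$ is non-atomic, so $\sigma_2$ acquires a nonzero continuous component.

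To conclude, choose a Borel set $E \subset \widehat{G}$ equal to the support of the continuous part of $\sigma_2$ with the countably many atoms of $\sigma_1$ removed: then $\sigma_1(E) = 0$ but $\sigma_2(E) > 0$, contradicting Proposition \ref{absolute-measure}. The main obstacle will be making the second paragraph precise: verifying that Langlands' Eisenstein construction produces a genuinely non-atomic measure on $\widehat{G}$, rather than one concentrated at Weyl-invariant or otherwise degenerate parameters. This requires invoking the analytic theory of Eisenstein series from \cite{L1,L2}, together with the fact that distinct generic principal series parameters descend to distinct points of $\widehat{G}$ outside a measure-zero reducibility locus.
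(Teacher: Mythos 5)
Your proposal is correct and follows essentially the same route as the paper: both arguments reduce to comparing the spectral measures of $R_{\Gamma_1}$ and $R_{\Gamma_2}$ via Proposition \ref{absolute-measure}, using the Gelfand--Piatetski-Shapiro direct-sum decomposition on the cocompact side and, on the non-cocompact side, the existence of nontrivial unipotents (hence of a proper percuspidal parabolic) forcing a nontrivial continuous Eisenstein spectrum via Lemma \ref{ortho-direct-sum} and Langlands' theory. The paper packages the second ingredient as a standalone characterization (Theorem \ref{cocompactness}) and concludes by equality of supports rather than by exhibiting a set of $\sigma_1$-measure zero and positive $\sigma_2$-measure, but the substance, including the one genuinely delicate point you flag (that the Eisenstein contribution is non-atomic in $\widehat{G}$), is the same.
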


The main ingredient of the proof of this theorem is the following characterization of co-compact lattices
in real semisimple Lie groups.

 \begin{thm}\label{cocompactness} Let $\Gamma$ be an irreducible lattice in $G$. Then the quotient
$\Gamma \backslash G$ is  compact if and only if the direct integral decomposition given by the Theorem
\ref{directintegral} is a Hilbert direct sum  i.e.
 \[ R_{\Gamma} \cong \widehat{ \bigoplus \limits_{j \in \N} }(\pi_j, V_j)\]for a countable family
 of irreducible unitary representations $(\pi_j, V_j)_{j \in \N}$  such that each $\pi_j$
 occurs with a finite multiplicity.
\end{thm}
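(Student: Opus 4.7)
The plan is to prove the two directions separately, using classical spectral theory for the forward implication and Langlands' Eisenstein series decomposition (Lemma \ref{ortho-direct-sum}) together with Proposition \ref{absolute-measure} for the converse.

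For the forward direction, suppose $\Gamma \backslash G$ is compact. The standard Gelfand--Graev--Piatetski-Shapiro argument applies: for any $f \in C_c(G)$, the convolution operator $R_\Gamma(f)$ is given by an integral kernel $K_f(x,y) = \sum_{\gamma \in \Gamma} f(x^{-1}\gamma y)$, which is a continuous function on the compact space $(\Gamma \backslash G) \times (\Gamma \backslash G)$ and hence in $L^2$. Thus $R_\Gamma(f)$ is Hilbert--Schmidt, in particular compact. Since $\{R_\Gamma(f) : f \in C_c(G)\}$ is a rich $*$-algebra of compact operators commuting with the $G$-action, standard spectral theory (see \cite{W}) yields that $L^2(\Gamma \backslash G)$ is the Hilbert direct sum of the $G$-isotypic pieces, each of which is a finite multiple of the corresponding irreducible representation. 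This gives the Hilbert direct sum form with finite multiplicities.

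For the converse, I will argue by contrapositive: assume $\Gamma \backslash G$ is not compact. Since the result is to be applied to arithmetic lattices (for which fundamental domains exist by Remark \ref{imp}), Langlands' theory of Eisenstein series applies. By Lemma \ref{ortho-direct-sum}, we have the orthogonal decomposition $L^2(\Gamma \backslash G) = \bigoplus_{\{P\}} L(\{P\},\{V\},W)$ indexed over associate classes of $\Gamma$-percuspidal subgroups. Non-cocompactness forces the existence of a $\Gamma$-percuspidal class $\{P\}$ with $P \neq G$, i.e.\ of positive split rank $g \geq 1$, by reduction theory. For such a class, the further decomposition
\[
L(\{P\},\{V\},W) = \bigoplus_{i=0}^{g} L_i(\{P\},\{V\},W)
\]
has nonzero terms with $i \geq 1$, which are spanned by Eisenstein series integrated along the unitary spectral parameters in $\mathfrak{a}^*$. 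These summands constitute a genuine direct integral with respect to an absolutely continuous measure on $\widehat{G}$, not a countable sum of point masses.

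To conclude, I invoke Proposition \ref{absolute-measure}: the spectral measure $\sigma$ associated to $R_\Gamma$ is determined up to mutual absolute continuity. If $R_\Gamma$ were a Hilbert direct sum of irreducible representations with finite multiplicities, then $\sigma$ would necessarily be purely atomic. The existence of the Eisenstein spectrum produces a nontrivial absolutely continuous component, which contradicts this. Hence $\Gamma \backslash G$ must be compact. The main obstacle in the argument is the analytic fact that the Eisenstein contributions $L_i$ for $i \geq 1$ are truly continuous rather than discrete; this is a cornerstone of Langlands' theory, and it is where the assumption of arithmeticity (via the existence of a fundamental domain) enters essentially.
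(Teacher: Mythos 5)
Your proof is correct and follows essentially the same route as the paper: the compact case via the standard Hilbert--Schmidt/Gelfand--Graev--Piatetski-Shapiro argument (which the paper simply cites as well known), and the noncompact case via Langlands' decomposition, the nontrivial continuous spectrum attached to a proper percuspidal class, and the uniqueness of the spectral measure from Proposition~\ref{absolute-measure}. The only real difference is how the proper percuspidal parabolic is produced: you invoke reduction theory directly, whereas the paper extracts a nontrivial unipotent element of $\Gamma$ (Raghunathan, Cor.~11.13) and builds a proper $\Q$-parabolic containing it via the Borel--Tits construction before noting it lies in $E(G,\Gamma)$ --- an equivalent but more roundabout step.
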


\begin{proof}
If $\Gamma \backslash G$ is compact then it is well known that
$R_{\Gamma}$ is a Hilbert direct sum as required. Conversely, if $\Gamma$ is not uniform,
then there is a unipotent element $u \neq 1$ in $\Gamma$  (cf. \cite{R}, Corollary 11.13).
To such a unipotent element $u$ in $\Gamma$ one can associate a proper parabolic subgroup of
$G$. Indeed, when $\mathbf{G}$ if of $\Q$-rank 1, then by the result ( Cf. 12.17, \cite{R}),
$u$ is contained in a unique $\Q$-parabolic subgroup of $\mathbf{G}$. In the case when
$\mathbb{Q}$-rank of $\mathbf{G}$ is atleast $2$, one associates to $u$ a $\Q$-parabolic subgroup
$P_u$ of $\mathbf{G}$ containing $u$ by following the procedure of Borel-Tits in \cite{BT}.
Let $U_1$ be the one parameter subgroup containing $u$, then take its normaliser $N_1$.
Let $U_2$ be the unipotent radical of $N_1$ (it contains $U_1$). Let $N_2$ be the normaliser
of $U_2$. After some stage this chain of $\Q$-subgroups $N_i$ and $U_i$ stabilizes. Thus,
we get the unipotent group $U=U_n$ which is the unipotent radical of the normaliser $P=N_n$,
and $U_n=U_{n+1}$. Then a result of Borel-Tits (cf. \cite {BT})
says that $P_u:=P$ is a proper parabolic subgroup containing $U \supset U_1$ and  $u \in U_1$. \\ \\
Let $E(G, \Gamma)$ be the set of all per-cuspidal parabolic subgroups of $G$ as in \ref{eisenstein}.
Since $\Gamma$ is arithmetic, $P_u \in E(G, \Gamma)$ ( cf. \cite{OW}, P. 23, 63).
Thus $P_u$ appears in the decomposition in Lemma \ref{ortho-direct-sum}. It follows from
(cf. \cite{L1}, P. 254) that the space $L_{i}(\left\{ P\right\}, \left\{ V \right\}, W)$ has a
continuous spectrum of dimension $i$. Thus we conclude that there is a non-trivial continuous
spectrum in the above decomposition since $P$ is proper. Hence the result follows.
\end{proof}

\noindent We now give the proof of the main theorem.

\begin{proof}[Proof of the Theorem \ref{main}]

The lattice $\Gamma_1$ is co-compact so by Theorem \ref{directintegral} there is a countable
subset $E$ of $\widehat{G}$ such that support of $\mu_{1}$ equals $E$. (Recall that
support of a measure is the set of all points $\omega$ in $\widehat{G}$ for which
every open neighborhood $U$ of $\omega$ has positive measure.)

Let $\mu_1$ and $\mu_2$ be the measures on $\widehat{G}$ corresponding to the representations
$R_{\Gamma_{1}}$ and $R_{\Gamma_{2}}$, respectively. Hence the measures $\mu_1$ and $\mu_2$ are
mutually absolutely continuous by the Proposition \ref{absolute-measure}. It follows that their
supports are equal. Thus the support of measure $\mu_2$ also equals $E$. In other words, $R_{\Gamma_{2}}$
has a decomposition as a Hilbert direct sum of irreducible representations of $G$. From Theorem
\ref{cocompactness} the desired result follows.

\end{proof}

{\small {\it Acknowledgements:}
We thank M S. Raghunathan, Nolan Wallach,  Laurent Clozel, David Vogan,
Paul Garrett, A. Raghuram and Sandeep Varma for their interest and helpful discussions. Our sincere thanks
to Gopal Prasad for pointing out the inaccuracies in the previous version.}

\end{document}